\DeclareRobustCommand\widecheck[1]{{\mathpalette\@widecheck{#1}}}
\def\@widecheck#1#2{%
    \setbox\z@\hbox{\m@th$#1#2$}%
    \setbox\tw@\hbox{\m@th$#1%
       \widehat{%
          \vrule\@width\z@\@height\ht\z@
          \vrule\@height\z@\@width\wd\z@}$}%
    \dp\tw@-\ht\z@
    \@tempdima\ht\z@ \advance\@tempdima2\ht\tw@ \divide\@tempdima\thr@@
    \setbox\tw@\hbox{%
       \raise\@tempdima\hbox{\scalebox{1}[-1]{\lower\@tempdima\box
\tw@}}}%
    {\ooalign{\box\tw@ \cr \box\z@}}}
\theoremstyle{plain} 
\newtheorem{lemma}[equation]{Lemma} 
\newtheorem{proposition}[equation]{Proposition} 
\newtheorem{theorem}[equation]{Theorem} 
\newtheorem{corollary}[equation]{Corollary} 
\newtheorem{priorResults}{Theorem}
\theoremstyle{definition}
\theoremstyle{remark}
\numberwithin{equation}{section}
\title[$ \ell ^{p}$ improving for Spherical Averages ] {
$ \ell ^{p}$-improving inequalities  for Discrete Spherical Averages } 
\author[R. Kesler]{Robert Kesler}   
\address{ School of Mathematics, Georgia Institute of Technology, Atlanta GA 30332, USA}
\email {rkesler6@mail.gatech.edu}
\author[M.~T. Lacey]{Michael T.  Lacey}   
\address{ School of Mathematics, Georgia Institute of Technology, Atlanta GA 30332, USA}
\email {lacey@math.gatech.edu}
\thanks{Research supported in part by grant  from the US National Science Foundation, DMS-1600693 and the 
Australian Research Council ARC DP160100153.}
\begin{document}

\begin{abstract}
 Let $ \lambda ^2 \in \mathbb N $, and in dimensions $ d\geq 5$, 
 let  $ A _{\lambda } f (x)$ denote the average of $ f \;:\; \mathbb Z ^{d} \to \mathbb R $ over 
 the lattice points on the sphere of radius $\lambda$ centered at $x$.    
 We prove   $ \ell ^{p}$ improving properties of 
 $ A _{\lambda }$. 
 \begin{equation*}
\lVert A _{\lambda }\rVert _{\ell ^{p} \to \ell ^{p'}} \leq C _{d,p, \omega (\lambda ^2 )} \lambda ^{d ( 1-\frac{2}p)}, 
\qquad    \tfrac{d-1}{d+1} < p \leq \frac{d} {d-2}.  
\end{equation*}
It holds  in dimension $ d =4$ for odd $ \lambda ^2 $.  The dependence is in terms of $ \omega (\lambda ^2 )$, the number of distinct prime 
factors of $ \lambda ^2 $.  
These inequalities are  discrete versions of a classical inequality of Littman and Strichartz on the $ L ^{p}$ improving property of spherical averages on $ \mathbb R ^{d}$.  In particular they are scale free, in a natural sense.  
The proof uses the decomposition of the corresponding multiplier  whose properties were established by Magyar-Stein-Wainger, and Magyar. 
We then use a proof strategy of Bourgain, which dominates each part of the decomposition by an endpoint estimate. 

\end{abstract}
	\maketitle

\section{Introduction} 

The subject of this paper is in discrete harmonic analysis, in which continuous objects are studied in the setting of the integer lattice. Relevant norm properties are much more intricate, with novel 
distinctions with the continuous case arising.  

In the continuous setting, $ L ^{p}$-improving properties of averages over lower dimensional surfaces are widely recognized as an essential property of such averages \cites{MR0358443,MR1969206,MR1654767,MR0256219}. It  continues to be very active subject of investigation.  
In the discrete setting, these questions are largely undeveloped. 
They are implicit in work on discrete fractional integrals  by several authors  \cites{MR2872554,MR1945293,MR1825254,MR1771530}, as well as two recent papers \cites{MR3933540,MR3892403} on sparse bounds for  discrete singular integrals.

Our  main results concern $\ell^p$ improving estimates for  averages over discrete spheres, in dimensions $ d \geq 5$, and in dimension $ d=4$, for certain radii.    

We recall the continuous case. For dimensions $d\geq 2$, let $ d\sigma $ denote Haar measure on the sphere of radius one, and set $  \mathcal   A_1 f = \sigma \ast f $ be convolution with respect to $ \sigma $.  
The classical result of Littman \cite{MR0358443} and Strichartz \cite{MR0256219} gives the sharp $ L ^{p}$ improving property of this average.   
Here, we are stating the result in a  restrictive way, but the full strength is obtained by interpolating with the obvious $ L ^{1} \to L ^{1}$ bound.   

\begin{priorResults}\label{t:LS} \cites{MR0358443,MR0256219}  For dimensions $ d \geq 2$,  we have 
$ \lVert  \mathcal A_1 \rVert _{ \frac{d+1}{d}  \to d+1}$.  
\end{priorResults}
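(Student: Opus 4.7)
The plan is to deduce the endpoint $L^{(d+1)/d} \to L^{d+1}$ bound for $\mathcal A_1 f = f \ast \sigma$ by Stein complex interpolation applied to an analytic family of convolution operators whose value at $z=0$ is (a constant multiple of) $\mathcal A_1$. The natural family is $T_z f = K_z \ast f$, where $K_z(x) = \tfrac{2}{\Gamma(z)}(1-|x|^2)_+^{z-1}$, initially defined for $\operatorname{Re} z > 0$ as an $L^1$ density and analytically continued in $z$. A standard computation identifies $K_0$ with a constant multiple of the surface measure $\sigma$ on $S^{d-1}$, so $T_0 = c\,\mathcal A_1$.

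The input from harmonic analysis is the Fourier transform of $K_z$, which can be written in terms of a Bessel function: $\widehat{K_z}(\xi) = c_d\,|\xi|^{-(d-2)/2-z} J_{(d-2)/2+z}(2\pi |\xi|)$. From the classical asymptotics of Bessel functions (equivalently, by the method of stationary phase applied directly to the oscillatory integral defining $\widehat{K_z}$), one obtains
\begin{equation*}
|\widehat{K_z}(\xi)| \;\lesssim\; e^{c(\operatorname{Im} z)^2}\, (1+|\xi|)^{-(d-1)/2 - \operatorname{Re} z},
\end{equation*}
uniformly on vertical strips, with admissible polynomial growth in $\operatorname{Im} z$. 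This is the standard stationary phase decay of $\widehat\sigma$ twisted by the regularization parameter $z$.

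Next I would establish two endpoint bounds, one on each boundary of a vertical strip containing $z=0$. On the line $\operatorname{Re} z = 1$, the kernel $K_z$ is a bounded, compactly supported function, so $\|T_z\|_{L^1\to L^\infty}\le C(1+|\operatorname{Im} z|)^N$. On the line $\operatorname{Re} z = -(d-1)/2$, the displayed estimate gives $\|\widehat{K_z}\|_{L^\infty}\le C e^{c(\operatorname{Im} z)^2}$, hence $\|T_z\|_{L^2\to L^2}\le C e^{c(\operatorname{Im} z)^2}$ by Plancherel. Both bounds have the admissible growth required by Stein's theorem.

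Finally, I would apply Stein complex interpolation between these two lines at the intermediate parameter $\theta = 2/(d+1)$, which places the interpolation point exactly at $z=0$, and which gives
\begin{equation*}
\tfrac{1}{p} \;=\; (1-\theta)\cdot 1 + \theta\cdot \tfrac12 \;=\; \tfrac{d}{d+1},
\qquad
\tfrac{1}{p'} \;=\; (1-\theta)\cdot 0 + \theta\cdot \tfrac12 \;=\; \tfrac{1}{d+1},
\end{equation*}
yielding the desired $L^{(d+1)/d}\to L^{d+1}$ bound for $T_0$, hence for $\mathcal A_1$. The main technical obstacle is the Bessel/stationary phase asymptotic for $\widehat{K_z}$ on the critical line $\operatorname{Re} z = -(d-1)/2$ with controlled growth in $\operatorname{Im} z$; once that is in hand, the rest is a direct application of Stein interpolation. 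A secondary point is to verify carefully that the analytic continuation of $K_z$ really does equal $c\sigma$ at $z=0$, which is a residue computation using the factor $1/\Gamma(z)$.
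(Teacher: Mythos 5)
Your proposal is correct: this is the classical proof of the Littman--Strichartz bound by Stein complex interpolation of the analytic family $K_z = \tfrac{2}{\Gamma(z)}(1-|x|^2)_+^{z-1}$ between an $L^1\to L^\infty$ bound at $\operatorname{Re} z=1$ and an $L^2\to L^2$ bound at $\operatorname{Re} z=-(d-1)/2$, with the Bessel asymptotics supplying the decay of $\widehat{K_z}$. The paper does not prove Theorem A but cites it to Littman and Strichartz, and the cited arguments proceed in essentially this way, so there is no divergence to report.
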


We study the discrete analog of $ \mathcal A_1 f $ in higher dimensions.  
For  $ \lambda ^2  \in \mathbb N $, 
let $\mathbb S^d_\lambda := \{ n\in \mathbb{Z}^d \;:\; \lvert n\rvert = \lambda\}$. 
For   a   function $ f$ on $ \mathbb Z ^{d}$, define 
\begin{equation*}
A _{\lambda } f (x) = 
\lvert \mathbb S^d_\lambda \rvert ^{-1}
\sum_{n \in  \mathbb S^d_\lambda } f    (x-n). 
\end{equation*}
The study of the harmonic analytic properties of these averages was initiated by Magyar 
\cite{MR1617657}, with Magyar, Stein and Wainger \cite{MR1888798} proving a discrete variant of the Stein spherical maximal function theorem \cite{MR0420116}.  
This result holds in dimensions $ d \geq 5$, as irregularities in the number of lattice points on spheres presenting obstructions to a positive result in dimensions $ d =2,3,4$.  
In particular, they proved the result below. See Ionescu  \cite{MR2053347} for an endpoint result, and the work of several others which further explore this topic \cites{MR1925339,MR2346547,160904313,MR3819049,MR3960006}.

\begin{priorResults}\label{t:MSW}[Magyar, Stein, Wainger, \cite{MR1888798}]
For $ d \geq 5$, there holds 
\begin{equation*}
\bigl\lVert  \sup _{\lambda }  \lvert    A _{\lambda } f \rvert \bigr\rVert _{p} \lesssim \lVert f\rVert _{p}, \qquad   \tfrac{d} {d-2} < p < \infty . 
\end{equation*}
\end{priorResults}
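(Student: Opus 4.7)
The plan is to apply the Hardy--Littlewood circle method to decompose the multiplier of $A_\lambda$ on the torus $\mathbb{T}^d$. For a cutoff scale $Q = Q(\lambda)$ chosen appropriately, I would write
$$\widehat{A_\lambda}(\xi) = \sum_{q \leq Q} \sum_{\substack{a \in (\mathbb{Z}/q\mathbb{Z})^d \\ \gcd(a,q)=1}} G(a/q)\, \Psi_q(\xi - a/q)\, \widehat{d\sigma_\lambda}(\xi - a/q) \;+\; E_\lambda(\xi),$$
where $G(a/q)$ is the normalized quadratic Gauss sum (satisfying $|G(a/q)| \lesssim q^{-d/2}$), $\Psi_q$ is a smooth bump supported in a ball of radius $\sim 1/q$ about the origin of $\mathbb{T}^d$, $d\sigma_\lambda$ is continuous surface measure on the Euclidean sphere of radius $\lambda$, and the error $E_\lambda$ satisfies a pointwise bound $\|E_\lambda\|_\infty \lesssim \lambda^{-\eta}$ for some $\eta = \eta(d) > 0$ when $d \geq 5$.

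Second, I would treat the main term by a transference/sampling argument. For each fixed $q$, the inner $a$-sum defines a $1/q$-periodic modulator concentrated near rationals of denominator $q$, and convolution against this piece on $\ell^p(\mathbb{Z}^d)$ is controlled, by a sampling theorem, by the continuous spherical averages $\mathcal{A}_\lambda^{\mathbb{R}}$ acting on $L^p(\mathbb{R}^d)$ along each of the $q^d$ residue classes modulo $q$. Taking the supremum in $\lambda$ and invoking Stein's continuous spherical maximal theorem (valid for $p > d/(d-1)$), one obtains an $\ell^p$ maximal bound for the $q$th block with a prefactor of the form $q^{-\kappa(d,p)}$, and absolute summability in $q$ then forces the range $p > d/(d-2)$ in dimensions $d \geq 5$; this is the source of the constraint in the theorem.

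Third, I would bound the error term $\sup_\lambda |E_\lambda \ast f|$ by interpolation between two endpoints. An $\ell^2$ square-function estimate
$$\Big\|\Big(\sum_\lambda |E_\lambda \ast f|^2\Big)^{1/2}\Big\|_{\ell^2} \lesssim \Big(\sum_\lambda \|E_\lambda\|_\infty^2\Big)^{1/2} \|f\|_{\ell^2}$$
follows from Plancherel together with the $\lambda$-decay of $\widehat{E_\lambda}$, with the sum on the right convergent precisely when $d \geq 5$. Interpolating this with the trivial bound $\|\sup_\lambda |E_\lambda \ast f|\|_{\ell^\infty} \lesssim \|f\|_{\ell^\infty}$ yields the desired $\ell^p$ estimate for the error in the stated range.

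The main obstacle is establishing the decay of $\widehat{E_\lambda}$ with a rate that is summable in $\lambda$. This requires combining precise stationary-phase asymptotics for $\widehat{d\sigma_\lambda}$ on the continuous side with Weyl-type minor-arc estimates for the relevant quadratic exponential sums, and tracking all constants uniformly in $\lambda$. The dimensional hypothesis $d \geq 5$ enters in three linked ways: it provides enough Gauss-sum decay to sum over $q$, it ensures $\ell^2$ square-summability of the error, and it guarantees that the two summability constraints intersect in a nonempty range $p > d/(d-2)$.
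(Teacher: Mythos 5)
This statement is Theorem~\ref{t:MSW}, a result of Magyar, Stein and Wainger quoted from \cite{MR1888798}; the paper offers no proof of it, so there is nothing internal to compare your argument against. Judged on its own terms, your sketch has the right architecture --- the circle-method decomposition of the multiplier into major-arc pieces built from Gauss sums, bump functions at the rationals $\ell/q$, and $\widetilde{d\sigma_\lambda}$, plus an error; transference of each $q$-block to Stein's continuous spherical maximal theorem; and a gain in $q$ whose summability produces the index $d/(d-2)$. (One small bookkeeping slip: the coefficient attached to the frequency $\ell/q$ is not a single Gauss sum but the Kloosterman-type sum $K(\lambda,q,\ell)=\sum_{a\in\mathbb Z_q^\times}e_q(-a\lambda^2)G(a/q,\ell)$ as in \eqref{e:Kdef}, with $\ell$ ranging over all of $\mathbb Z_q^d$; your indexing conflates the scalar $a$ with the vector $\ell$.)

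The genuine gap is in your treatment of the error term. First, the bound $\lVert\sup_\lambda|E_\lambda\ast f|\rVert_{\ell^\infty}\lesssim\lVert f\rVert_{\ell^\infty}$ is not trivial and is not available as stated: $E_\lambda=A_\lambda-C_\lambda$, and while $A_\lambda$ is an average, the approximant $C_\lambda=\sum_{q\le\lambda}C_{\lambda,q}$ has $\ell^\infty\to\ell^\infty$ norm growing polynomially in $\lambda$ (each $q$-piece costs $O(q)$ from the sum over $a\in\mathbb Z_q^\times$), so the supremum over all $\lambda$ is unbounded at that endpoint without a further (dyadic) decomposition. Second, and more seriously, even granting an $\ell^\infty$ (or $\ell^{1+\epsilon}$ with polynomial loss) endpoint, interpolating it with your $\ell^2$ square-function estimate only yields the error bound for $p\ge 2$ (or for $p$ in a small neighborhood of $2$ after the dyadic fix). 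Since $d/(d-2)<2$ for $d\ge5$, the range $d/(d-2)<p<2$ --- which is precisely the hard and sharp part of the theorem --- is not covered by your error analysis, and the main-term bound alone cannot rescue it. Closing this range is where the real work in \cite{MR1888798} lies: the error must be split into several pieces (large-$q$ major arcs, the discrepancy between the true exponential sum and its Gauss-sum approximation, minor arcs), each estimated with enough quantitative decay that the interpolation reaches all the way down to $p>d/(d-2)$. As written, your proposal proves the theorem only for $p\ge2$.
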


\noindent We will refer to $ p  _{\textup{MSW}}= \frac{d} {d-2}$ as the Magyar Stein Wainger index.

Our first main result is a discrete variant of the result of Littman and Strichartz above.  
First note that   $ A _{\lambda }$ is clearly bounded from $ \ell ^{p}$ to $ \ell ^{p}$, 
for all $ 1\leq p \leq \infty $.  Hence, it trivially improves any $ f \in \ell ^{p} (\mathbb Z ^{d})$ to an $ \ell ^{\infty }(\mathbb Z ^{d})$ function.   
But, proving a  \emph{scale-free version} of the inequality is not at all straightforward. 

In dimensions $ d=4$, there is an arithmetical obstruction, namely for certain radii $ \lambda $, the number of points on the sphere of radius $ \lambda $ can be very small.  To address this, let 
$\Lambda_d := \{ 0 < \lambda < \infty \;:\; 
\lambda^2\in \mathbb{N}\}$, for $d\geq 5$, 
and for $d=4$, 
\begin{equation}\label{e:Lam}
\Lambda_4 := 
\{ 0 < \lambda < \infty \;:\; 
\lambda^2\in \mathbb{N}\setminus 4\mathbb{N}\}
\end{equation}
Following the work of Magyar \cite{MR1925339}, we will address the case of dimension $ d=4$ below. 
And, we will prove results \emph{below the Magyar Stein Wainger index.}

\begin{theorem}\label{t:improve}  
  In dimensions $ d \geq 4$,    the inequality below holds for all $ \lambda \in \Lambda _d$.  
\begin{equation} \label{e:improve}
\lVert A _{\lambda }\rVert _{p \to p'} \leq C_{d,p, \omega (\lambda ^2 )} \lambda ^{d \left( 1 - \frac2{p} \right)}, 
\qquad  \tfrac{ d+1}  {d - 1} < p  \leq 2.  
\end{equation}
Above, $ \omega (\lambda ^2 )$ is the number of distinct prime factors of $ \lambda ^2 $. 
In order that \eqref{e:improve} hold, it is necessary that $ p \geq  \frac{d +1}{d}$, for $ d\geq 5$.  

\end{theorem}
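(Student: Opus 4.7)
The plan follows the two-step program highlighted in the abstract: decompose the Fourier multiplier $m_\lambda$ of $A_\lambda$ using the Magyar-Stein-Wainger apparatus (with Magyar's refinement for dimension $d=4$ when $\lambda^2 \in \mathbb{N} \setminus 4\mathbb{N}$), then dominate each arithmetic piece by an endpoint estimate that reduces, on the analytic side, to the continuous Littman-Strichartz inequality (Theorem \ref{t:LS}). The necessary condition is settled separately by a Knapp-type construction.

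I would first record the multiplier decomposition in schematic form
\begin{equation*}
m_\lambda(\xi) = \sum_{q \geq 1} \sum_{\substack{a \in \mathbb{Z}_q^d \\ (a,q)=1}} G(a,q,\lambda^2)\,\Psi_q(\xi - a/q)\,\widehat{d\sigma_\lambda}(\xi - a/q) + E_\lambda(\xi),
\end{equation*}
where $G(a,q,\lambda^2)$ is a normalized Gauss sum (with decay $|G| \lesssim q^{-d/2}$ off a thin set of arithmetically ``bad'' $q$), $\Psi_q$ is a smooth localization at scale $\sim 1/q$ about $a/q$, $d\sigma_\lambda$ is the Euclidean surface measure on the sphere of radius $\lambda$, and $E_\lambda$ is a rapidly decaying remainder. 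The restrictions $d \geq 5$, or $d=4$ with $\lambda^2 \notin 4\mathbb{N}$, are precisely what is required to guarantee non-degeneracy of the Gauss sums. Writing $A_\lambda = \sum_q A_\lambda^q + \mathcal{E}_\lambda$, the kernel of each $A_\lambda^q$ factors, up to acceptable errors, into an arithmetic part on $\mathbb{Z}_q^d$ and a continuous spherical average at scale $\lambda$ sampled on $\mathbb{Z}^d$.

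The core step is then to bound each $A_\lambda^q$ from $\ell^p$ to $\ell^{p'}$. Applying Theorem \ref{t:LS} to the continuous component produces the correct $\lambda^{d(1-2/p)}$ scaling, while the arithmetic component contributes a power of $q$ controlled by the Gauss sum bound. Interpolating against the trivial $\ell^2 \to \ell^2$ estimate yields, for each $q$, a bound of the form $\|A_\lambda^q\|_{p \to p'} \lesssim q^{-s(p,d)}\lambda^{d(1-2/p)}$, where $s(p,d) > 0$ precisely in the range $(d+1)/(d-1) < p \leq 2$. Summing over $q$ then produces the $\omega(\lambda^2)$-dependent constant: the Gauss sum decay provides geometric convergence for generic $q$, while the finitely many denominators sharing prime factors with $\lambda^2$, whose count is controlled by $\omega(\lambda^2)$, must be handled separately and generate the stated constant.

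For the necessity of $p \geq (d+1)/d$ in dimension $d \geq 5$, I would adapt the classical Knapp construction, testing $A_\lambda$ against the indicator of a lattice rectangle of dimensions $\sim \sqrt{\lambda}$ in tangential directions and $\sim 1$ in the normal direction, positioned near a point on $\mathbb{S}_\lambda^d$; computing the $\ell^p$ and $\ell^{p'}$ norms reproduces the classical Knapp obstruction at the Littman-Strichartz endpoint. The main obstacle will be executing the summation over $q$ uniformly in $\lambda$: tracking Gauss sums for denominators $q$ arithmetically related to $\lambda^2$ and extracting the $\omega(\lambda^2)$ dependence cleanly. A secondary issue is that the range limitation $(d+1)/(d-1) < p$ in \eqref{e:improve}, strictly above the necessary $(d+1)/d$, reflects the trade-off between the Gauss sum decay $q^{-d/2}$ and the $\ell^p \to \ell^{p'}$ norm of the arithmetic kernel, rather than an essential feature of the problem.
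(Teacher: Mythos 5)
Your high-level outline (circle-method decomposition, endpoint bounds per piece, sum over $q$) matches the paper's architecture, but the central step as you state it does not work, and the mechanism that actually produces the sharp constant is missing. You claim that for each $q$ one has an operator-norm bound $\lVert A^q_\lambda\rVert_{p\to p'}\lesssim q^{-s(p,d)}\lambda^{d(1-2/p)}$ obtained by "applying Theorem \ref{t:LS} to the continuous component" and interpolating with $\ell^2\to\ell^2$. The available endpoints for a single piece are an $\ell^2\to\ell^2$ bound of size $O(1)$ (at $q=1$) and an $\ell^1\to\ell^\infty$ bound of size $\sup_n|\widecheck{c_{\lambda,q}}(n)|\sim\lambda^{-(d-1)}$ (the continuous sphere smoothed at scale $q$, via Proposition \ref{p:mq}); interpolating these gives $\lambda^{(d-1)(1-2/p)}$, which misses the target $\lambda^{d(1-2/p)}$ by the positive power $\lambda^{2/p-1}$ for every $p<2$. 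The continuous Littman--Strichartz inequality does not transfer to the periodized, lattice-sampled multiplier $c_{\lambda,q}$ in the $\ell^p\to\ell^{p'}$ sense, so it cannot supply the missing power. The paper recovers the sharp exponent only by \emph{not} interpolating operator norms: it works with indicator functions $f=\mathbf 1_F$, $g=\mathbf 1_G$ in a cube of side $\lambda$, introduces a second decomposition parameter $N$ together with an extra frequency cutoff $\Phi_{\lambda q/N}$ in \eqref{e:xcq1} (the low-frequency part is a positive kernel averaging over an annulus of width $\lambda q/N$, bounded in $\ell^\infty$ by $N^2\langle f\rangle_E$ after summing $q\le N$; the high-frequency part goes to $\ell^2$ via stationary phase), and then optimizes $N$ against the densities $\langle f\rangle_E,\langle g\rangle_E$ --- Bourgain's scheme. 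This yields a restricted weak-type bound at $(\tfrac{d-1}{d+1},\tfrac{d-1}{d+1})$ which is then interpolated with $\ell^2$. Without the indicator-function structure and the $N$-optimization there is no route to $\lambda^{d(1-2/p)}$.

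Two further points. First, your arithmetic input is too weak: the Gauss-sum bound $|G|\lesssim q^{-d/2}$, summed over the $\phi(q)$ values of $a$, only gives $|K(\lambda,q,\ell)|\lesssim q^{1-d/2}$, which lands the argument at the Magyar--Stein--Wainger index $d/(d-2)$ rather than $(d+1)/(d-1)$. Reaching the stated range requires the Weil/Kloosterman square-root cancellation in the $a$-sum (Lemma \ref{l:K}), and the $\omega(\lambda^2)$-dependence then enters through the divisor sum $\sigma_{-1/2}(\lambda^2)$ controlling the gcd factors $\rho(q,\lambda)$ over \emph{all} $q$ (Proposition \ref{p:rho}) --- it is not a matter of "finitely many bad denominators." Second, for the necessity claim the paper does not use a Knapp cap (whose lattice-point count on $\mathbb S^d_\lambda$ at scale $\sqrt\lambda$ would itself require nontrivial equidistribution input); it tests $A_\lambda$ against $f=\mathbf 1_{\mathbb S_\lambda}$ and the superlevel set $\{A_\lambda f>c/\lambda\}$, which is shown to have cardinality $\gtrsim\lambda$.
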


This Theorem was independently proved by Hughes \cite{180409260H}. 
The proof herein  uses the same elements, but optimizes the interpolation part of the argument.  It is short, and simple enough that one can give concrete estimates for the dependence on $ \lambda $, which we indicate below.

To explain our use of the phrase `scale free'   we make this definition.
For a cube $ Q \subset \mathbb R ^{d}$ of volume at least one, we set localized and normalized norms to be 
\begin{equation} \label{e:localNorm}
\langle f \rangle _{Q, p} := \Bigl[
\lvert  Q\rvert ^{-1} 
\sum_{n \in Q \cap \mathbb Z ^{d}} \lvert  f (n)\rvert ^{p} 
\Bigr] ^{1/p}, \qquad 0< p \leq \infty . 
\end{equation} 
An equivalent way to phrase our theorem above is the following corollary. Note that in this language, the 
inequalities in \eqref{e:Improve} are uniform in the choice of $ \lambda $.  

\begin{corollary}\label{c:fixed} Let $ d\geq 4$, and set $ \mathbf I_d$ to be the open triangle with vertices 
$ (0,1)$, $ (1,0)$, and $ (\frac{d-1} {d+1}, \frac{d-1} {d+1})$.  (See Figure~\ref{f:IS}.)
For $ (1/p_1, 1/p_2) \in \mathbf I_d$, there is a finite constant $ C = C _{d, p_1 ,p_2, \omega (\lambda ^2 )}$ so that  
\begin{equation}\label{e:Improve}
\langle A _{\lambda } f _1, f_2 \rangle   \leq C   \langle f_1 \rangle _{Q,p_1} \langle f_2\rangle _{Q,p_2}
\lvert  Q\rvert, \qquad    \lambda \in \Lambda_d.  
\end{equation}

\end{corollary}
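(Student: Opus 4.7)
The plan is to realize \eqref{e:Improve} as the bilinear / local-norm reformulation of Theorem~\ref{t:improve} at its natural scale $|Q|\sim\lambda^d$, extended to the full open triangle $\mathbf I_d$ by bilinear interpolation between three corner estimates.

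For $f_1,f_2$ supported on a common cube $Q$, the identity $\|f_i\|_{p_i}=|Q|^{1/p_i}\langle f_i\rangle_{Q,p_i}$ shows that \eqref{e:Improve} is equivalent to
\[
|\langle A_\lambda f_1,f_2\rangle|\le C\,|Q|^{\,1-1/p_1-1/p_2}\,\|f_1\|_{p_1}\|f_2\|_{p_2}.
\]
At the vertex $(1,0)$ of $\mathbf I_d$ the exponent of $|Q|$ vanishes and the bound is immediate from $\|A_\lambda g\|_1\le\|g\|_1$; the vertex $(0,1)$ is symmetric. At the remaining limiting vertex $(\tfrac{d-1}{d+1},\tfrac{d-1}{d+1})$, which lies on $\partial\mathbf I_d$, we approach from the interior: for any $p\in(\tfrac{d+1}{d-1},2]$, Theorem~\ref{t:improve} together with duality gives
\[
|\langle A_\lambda f_1,f_2\rangle|\le C_{d,p,\omega(\lambda^2)}\,\lambda^{d(1-2/p)}\,\|f_1\|_p\|f_2\|_p,
\]
and at the natural scale $|Q|\sim\lambda^d$ the prefactor $\lambda^{d(1-2/p)}$ is exactly $|Q|^{1-2/p}$, matching the reformulated inequality along the diagonal of $\mathbf I_d$.

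Bilinear Riesz--Thorin interpolation among these three corner inequalities then produces the estimate at every $(1/p_1,1/p_2)\in\mathbf I_d$, with interpolated constant of the announced form $C_{d,p_1,p_2,\omega(\lambda^2)}$ (the $\omega(\lambda^2)$ dependence entering only through the diagonal corner inherited from Theorem~\ref{t:improve}).

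The main obstacle is the scale-matching between the operator's intrinsic scale $\lambda$ and the cube side $|Q|^{1/d}$: Theorem~\ref{t:improve} is naturally stated in terms of $\lambda$, while \eqref{e:Improve} demands a bound in $|Q|$, uniform in $\lambda$. Because the kernel of $A_\lambda$ is supported on $\mathbb S^d_\lambda$, of diameter $2\lambda$, the bilinear form on $Q$-supported functions vanishes whenever $|Q|^{1/d}<\lambda$, and the inequality is vacuous in that regime. For $|Q|^{1/d}>\lambda$ one reduces to the natural scale by partitioning $Q$ into sub-cubes $R_j$ of side $\lambda$ and applying the diagonal estimate above to each pair of neighbouring sub-cubes $(R_j,R_k)$; each sub-cube has only $O_d(1)$ neighbours within the kernel's range, and a H\"older summation on the sequences $\{\|f_i\mathbf 1_{R_j}\|_{p_i}\}_j$---using the nesting $\ell^{p_2}\hookrightarrow\ell^{p_1'}$ available throughout the range $1/p_1+1/p_2\ge 1$ enforced by $\mathbf I_d$---reassembles the pieces into the desired global estimate.
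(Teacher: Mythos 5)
Your reformulation of \eqref{e:Improve} as $\lvert \langle A_\lambda f_1,f_2\rangle\rvert \le C\,\lvert Q\rvert^{1-1/p_1-1/p_2}\lVert f_1\rVert_{p_1}\lVert f_2\rVert_{p_2}$, the trivial corner bounds at $(1,0)$ and $(0,1)$, and the bilinear interpolation with interior diagonal points supplied by Theorem~\ref{t:improve} are all correct, and for cubes with $\lvert Q\rvert^{1/d}\simeq\lambda$ this is precisely the (unwritten) argument behind the paper's remark that the corollary is ``an equivalent way to phrase'' the theorem: when $\lvert Q\rvert\simeq\lambda^d$ one has $\lambda^{d(1-2/p)}\lVert f_1\rVert_p\lVert f_2\rVert_p\simeq \lvert Q\rvert\,\langle f_1\rangle_{Q,p}\langle f_2\rangle_{Q,p}$, and the off-diagonal points of $\mathbf I_d$ follow by interpolation since the exponent of $\lvert Q\rvert$ is affine in $(1/p_1,1/p_2)$. (A small imprecision: the form vanishes when $\operatorname{diam}(Q)<\lambda$, i.e.\ $\sqrt d\,\lvert Q\rvert^{1/d}<\lambda$, not when $\lvert Q\rvert^{1/d}<\lambda$; but the intermediate range still has $\lvert Q\rvert\simeq_d\lambda^d$, so this is harmless.)

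The last paragraph, however, does not work. After applying the scale-$\lambda$ estimate to neighbouring pairs $(R_j,R_k)$ and summing by H\"older, what you obtain is
\begin{equation*}
\lvert \langle A_\lambda f_1,f_2\rangle\rvert \lesssim \lambda^{\,d\left(1-\frac1{p_1}-\frac1{p_2}\right)}\lVert f_1\rVert_{p_1}\lVert f_2\rVert_{p_2},
\end{equation*}
not $\lvert Q\rvert^{1-\frac1{p_1}-\frac1{p_2}}\lVert f_1\rVert_{p_1}\lVert f_2\rVert_{p_2}$. Since every point of the open triangle $\mathbf I_d$ satisfies $\frac1{p_1}+\frac1{p_2}>1$, the exponent is negative, so $\lambda^{d(1-1/p_1-1/p_2)}\geq \lvert Q\rvert^{1-1/p_1-1/p_2}$ once $\lambda^d\leq\lvert Q\rvert$, with the discrepancy growing without bound as $\lvert Q\rvert/\lambda^d\to\infty$; the pieces do not reassemble into the claimed global bound. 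Nor can any argument do so: taking $f_1=\mathbf 1_R$ and $f_2=\mathbf 1_{3R}$ with $R\subset Q$ a sub-cube of side $\lambda$, the left side of \eqref{e:Improve} is $\simeq\lambda^d$ while the right side is $\simeq(\lambda^d)^{\frac1{p_1}+\frac1{p_2}}\lvert Q\rvert^{1-\frac1{p_1}-\frac1{p_2}}\to0$ as $\lvert Q\rvert\to\infty$. The corollary must therefore be read, as in \eqref{e:KLM}, with $Q$ adapted to the radius (side length comparable to $\lambda$); under that reading your first three paragraphs already constitute a complete proof, and the large-$Q$ extension should simply be deleted.
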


\begin{figure}
\begin{tikzpicture}[scale=1.5]
\draw [->] (0,-.5) -- (0,3.5) node[left] {\textsubscript{$ {1}/ {p_2}$}}; 
\draw [->] (-.5,0) -- (3.9,0) node[below] {\textsubscript{$ {1}/ {p_1}$}}; 
\draw (3,.05) -- (3,-.25) node[right] {\textsubscript{1}};
\draw (.05,3) -- (-.25,3) node[below] {\textsubscript{1}};
\draw[dotted]   (0,3) -- (2.5, 2.5) node[above,right] {\textsubscript{$ (\frac{d-1} {d+1},\frac{d-1} {d+1} )$}}-- (3,0) -- (0,3);  

\draw   (1.6,1.6) node[right] {$ \mathbf I_d$  };  
\filldraw (2,2) circle (.05em) node (MSW) {};  
\draw (5, 1.5 ) node { $ \bigl( \frac 1 {p  _{\textup{MSW}}},\frac 1 {p  _{\textup{MSW}}}\bigr)$, see \eqref{e:KLM}};
\draw[->]  (3.7,1.5) to [out = 180 , in = -30] (MSW) ; 
\end{tikzpicture}

\caption{ 
The triangle $ \mathbf I_d$ of Theorem~\ref{c:fixed}, for the  $ \ell ^{p}$ improving inequality \eqref{e:improve}, is the dotted triangle with corners $ (0,1)$ to $ P_1$ to $ (1,0)$.   The diagram above is for the case of dimension $ d \geq 5$.  The point closest to the diagonal corresponds to the 
Magyar Stein Wainger index. At this point the maximal inequality \eqref{e:KLM} holds. 
}
\label{f:IS}
\end{figure}
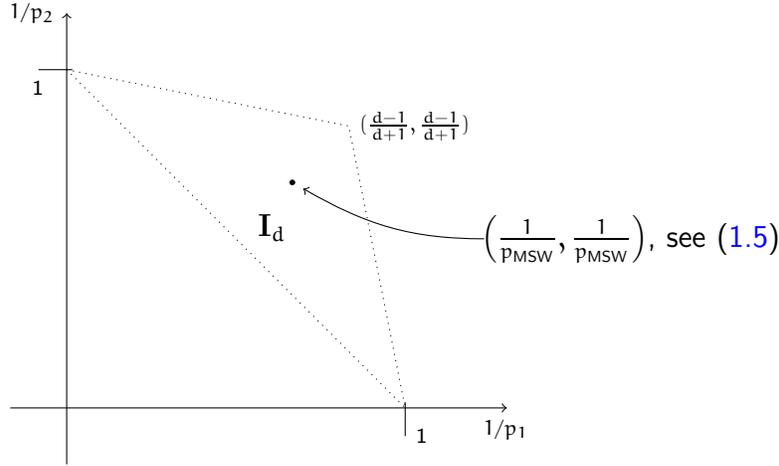

Our main inequality is only of interest for $ \tfrac{d+1} {d-1} < p < \frac {d} {d-2} = p _{\textup{MSW}}$, in the case of $ d\geq 5$.  
Indeed, at $ p _{\textup{MSW}}$, we know a substantially better result.   
For indicators functions $ f = \mathbf 1_{F}$ and $ g = \mathbf 1_{G}$ supported in a cube $ E$ of side length $ \lambda _0$, 
we have \cite{181002240}  this restricted maximal estimate at the index $ p _{\textup{MSW}}$. 
\begin{equation}  \label{e:KLM}
 \bigl\langle  \sup _{\lambda _0/2  < \lambda < \lambda _0 } 
 A _{\lambda } f , g  \bigr\rangle \lesssim \langle f \rangle _{E, \frac{d} {d-2}} \langle g \rangle _{E,  \frac{d} {d-2}} \lvert  E\rvert .  
\end{equation}

The proof of \eqref{e:improve} requires a  circle method decomposition of $ A _{\lambda }$ in terms of its Fourier multiplier.  The key elements here were developed by Magyar, Stein and Wainger \cite{MR1888798}, with  additional observations of Magyar \cite{MR2287111}. 
We recall this in  \S\ref{s:decompose}. 
The short proof in \S\ref{s:proof}
uses indicator functions, following work of Bourgain \cite{MR812567}, and in the discrete setting Ionescu \cite{I}, and 
Hughes \cite{MR3671577}.  
We comment briefly on sharpness in the last section of the paper.

\smallskip 
We acknowledge useful conversations with Alex Iosevich and Francesco Di Plinio on the topics of this paper.  
Fan Yang and the referee suggested several improvements of the paper.

\section{Decomposition} \label{s:decompose}

Throughout $ e (x)= e ^{2 \pi i x}$.    
The Fourier transform on $ \mathbb Z ^{d}$ is given by 
\begin{equation} \label{e:FT}
 \widehat f  (\xi )  = \sum_{x\in \mathbb Z ^{d}} e (-\xi \cdot x) f (x), \qquad \xi \in \mathbb T ^{d} \equiv [0,1] ^{d}. 
\end{equation}
We will write  $ \widecheck \phi $ for the inverse Fourier transform. 
The Fourier transform on $ \mathbb R ^{d}$ is 
\begin{equation} \label{e:FR}
\widetilde   \phi  (\xi ) = \int _{\mathbb R ^{d}} e (- \xi \cdot  x  ) f (x) \; dx . 
\end{equation} 

We work exclusively with convolution operators $ K :  f \mapsto  \int _{\mathbb T ^{d}} k (\xi  ) \widehat f (\xi  ) e ( \xi \cdot  x  )\;d \xi $.  In this notation, $ k $ is the multiplier, and the convolution is 
$ \widecheck k \ast f $.  
Lower case letters are frequently, but not exclusively, used for the multipliers, and capital letters for the  corresponding convolution operators.

The following estimate for the number of lattice points on a sphere holds. 
\begin{equation}\label{e:simeq}
\lvert \mathbb{S}_n^d\rvert = \lvert  \{ n \in \mathbb Z ^{d } \;:\; \lvert  n\rvert = \lambda  \}\rvert  \simeq \lambda ^{d-2}, 
\qquad \lambda \in \Lambda_d.    
\end{equation}
Redefine  the discrete spherical averages $ A _{ \lambda } f $ to be 
\begin{align}   \label{e:normalA}
A _{\lambda } f (x) &= \lambda ^{-d+2 } 
\sum_{n \in \mathbb Z ^{d } \;:\; \lvert n\rvert  = \lambda  } f (x-n)
\\
&= \int _{\mathbb T ^{d}} a _{\lambda } (\xi ) \widehat f (\xi ) e (\xi \cdot  x  )\; d \xi 
\\
\textup{where} \quad 
a _{\lambda } (\xi ) & = 
\lambda ^{-d+2}
\sum_{n \in \mathbb Z ^{d } \;:\; \lvert n\rvert = \lambda  } 
e (\xi \cdot n).
\end{align}
The  decomposition of $ a _{ \lambda }  $ into a `main' term $ c _{\lambda } $ and an `residual' term $ r _{\lambda }   = a _{\lambda }   - c _{\lambda }$  follows  development of Magyar, Stein and Wainger \cite{MR1888798}*{\S5}, Magyar \cite{MR2287111}*{\S4} and Hughes \cite{160904313}*{\S4}.  We will be very brief. 

For integers $ q$, set  $ \mathbb Z_q ^{d} = (\mathbb Z  / q \mathbb Z )^{d}$. Set  $  \mathbb Z _q ^{\times } = \{a \in \mathbb Z _q  \;:\; (a,q)=1\}$ to be the multiplicative group. 
We have 
\begin{align}\label{e:a1}
c _{\lambda } (\xi )  &= \sum_{1\leq q \leq \lambda }   c _{\lambda ,q} (\xi ) , 
\\    \label{e:Cdef}
c _{\lambda , q } (\xi )&= \sum_{\ell \in \mathbb Z_q^d} 
K (\lambda ,  q,  \ell )  \Phi _{q} (\xi - \ell /q)  \widetilde  {d \sigma _{\lambda }} ( \xi - \ell /q), 
\\
\label{e:Kdef}
K (\lambda ,  q,  \ell ) & =    
 q ^{-d} \sum_{ a \in \mathbb Z^\times_q } \sum_{n\in \mathbb Z_q^d} e_q  \bigl(-a\lambda^2  + \lvert  n\rvert ^2 a + n \cdot \ell  \bigr)  . 
\end{align}
Above, $ \Phi $ is a smooth non-negative  radial bump function, $ \mathbf 1_{[-1/8,1/8] ^{d}} \leq \Phi \leq \mathbf 1_{[-1/4,1/4] ^{d}}$.  Further, $ \Phi _{q} (\xi ) = \Phi (q \xi )$.  
Throughout we use $ e _{q} (x) =  e (x/q) = e ^{2 \pi i x/q}$.  
The term in \eqref{e:Kdef} is a Kloosterman sum, a fact that is hidden in the expression above, but becomes clear after exact summation of the quadratic Gauss sums.  
In addition, $ d \sigma _{\lambda }$ is the continuous unit Haar measure on the sphere of radius $ \lambda $ in $ \mathbb R ^{d}$.  
Recall  the stationary phase estimate 
\begin{equation}\label{e:stationary}
\lvert   \widetilde {d \sigma _{\lambda }} (\xi ) \rvert \lesssim  \lvert  \lambda \xi \rvert ^{- \frac{d-1}2}.   
\end{equation}

Essential here is the \emph{Kloosterman refinement}. 
The estimate below goes back to the work of Kloosterman \cite{MR1555249} and Weil \cite{MR0027006}.
Magyar \cite{MR2287111}*{\S4} used it in this kind of setting.  (It is essential to the proof of Lemma~\ref{l:Akos}.)

\begin{lemma}\label{l:K}  \cite{MR2287111}*{Proposition 7} For all  $ \eta >0$, and all $ 1 \leq q \leq \lambda $,  $ \lambda \in \Lambda_d$,  
\begin{equation}\label{e:K<}
\sup _{\ell } \lvert  K (\lambda ,  q,  \ell ) \rvert \lesssim _{\eta } q ^{- \frac{d-3}2 + \eta } 
 \rho (q, \lambda ) , 
\end{equation}
 where we write $ q = q_1 2 ^{r}$, with $ q_1$ odd, so that $ \rho (q, \lambda ) = \sqrt { (q_1, \lambda ^2  ) 2 ^{r}}$, where $ (q_1, \lambda ^2 )$ is the greatest common divisor of $ q_1 $ and $ \lambda ^2 $.  The implied constant only depends upon $ \eta >0$. 
\end{lemma}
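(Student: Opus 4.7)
The plan is to reduce $K(\lambda, q, \ell)$ to a classical Kloosterman sum and then invoke Weil's bound. The first move is multiplicativity: by the Chinese Remainder Theorem, writing $q = q_1 2^r$ with $q_1$ odd, the sum factors across the coprime decomposition, so it suffices to bound the contributions from modulus $q_1$ and modulus $2^r$ separately and then multiply.

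For the odd modulus $q_1$, the key step is completion of the square in the inner sum over $n \in \mathbb{Z}_{q_1}^d$: since $2$ is a unit modulo $q_1$, each coordinate produces a one-dimensional quadratic Gauss sum of magnitude $\sqrt{q_1}$ together with a phase $e_{q_1}(-\overline{4a}\,\ell_j^2)$. After the $d$-fold product and the normalizing factor $q_1^{-d}$, what remains is an outer sum over $a \in \mathbb{Z}_{q_1}^\times$ of the form
\begin{equation*}
q_1^{-d/2} \sum_{a \in \mathbb{Z}_{q_1}^\times} \chi(a)\, e_{q_1}\bigl(-\lambda^2 a - \overline{4}\,\lvert\ell\rvert^2\, \overline{a}\bigr),
\end{equation*}
where $\chi(a) = \bigl(\tfrac{a}{q_1}\bigr)^d$ is either trivial or a quadratic character. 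This is a (possibly twisted) Kloosterman sum, and Weil's bound $\lvert S(m,n;q_1)\rvert \leq \tau(q_1)\sqrt{(m,n,q_1)}\sqrt{q_1}$ controls it by $\tau(q_1)\sqrt{(\lambda^2,\lvert\ell\rvert^2,q_1)}\sqrt{q_1}$. Taking the supremum over $\ell$ (the gcd is maximized at $\ell = 0$, giving $(\lambda^2, q_1)$) yields the estimate $\lesssim_\eta q_1^{-(d-1)/2+\eta}\sqrt{(\lambda^2,q_1)}$, slightly better than the $q_1$-portion of what is required.

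The main obstacle is the dyadic piece. On modulus $2^r$ one cannot complete the square (since $2$ is a zero divisor), so Weil's bound is not directly applicable. Instead, one must carry out the Gauss sums on $\mathbb{Z}_{2^r}^d$ by hand, separating cases by the $2$-adic valuations of $a$, $\ell_j$, and $\lambda^2$; since $\lambda^2$ and $2^r$ need not share useful structure, the only savings available is the generic $\sqrt{2^r}$ appearing in $\rho(q,\lambda)$, and one aims for $\lvert K(\lambda, 2^r, \cdot)\rvert \lesssim_\eta (2^r)^{-(d-3)/2+\eta}\sqrt{2^r}$. Multiplying the two contributions and noting $q = q_1 2^r$ reproduces the claimed bound $q^{-(d-3)/2+\eta}\rho(q,\lambda)$. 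This analysis is the content of Magyar's \cite{MR2287111}*{Proposition 7}, which the lemma invokes.
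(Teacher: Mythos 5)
Your outline is correct and is essentially the standard argument behind the cited result; note that the paper itself gives no proof of this lemma, deferring entirely to Magyar's Proposition 7, exactly as your final sentence does. One small simplification: the dyadic piece is less delicate than you suggest, since for odd $a$ the one-dimensional Gauss sum $\sum_{n \bmod 2^r} e_{2^r}(an^2+bn)$ has modulus at most $\sqrt{2^{r+1}}$, so taking absolute values in the $a$-sum already yields $\lvert K(\lambda,2^r,\cdot)\rvert \lesssim_d (2^r)^{1-d/2}$, which beats the required $(2^r)^{-(d-3)/2}\sqrt{2^r}=(2^r)^{2-d/2}$ by a full factor of $2^r$; combined with your (stronger than needed) odd-modulus bound $q_1^{-(d-1)/2+\eta}\sqrt{(q_1,\lambda^2)}$ via CRT, this gives \eqref{e:K<}.
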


Concerning the terms $ \rho (q, \lambda ) $, we need this Proposition. 

\begin{proposition}\label{p:rho}  We have for $ N<   \lambda  $ and $ a>1$, and all $ \eta  >0$
\begin{align}  \label{e:rho}
\sum_{ q \;:\; N \leq q   } q ^{-a} \rho (\lambda ,q)   &\lesssim   
 N ^{1-a}  \sigma _{-1/2} (\lambda ^2 )
, 
\\ \label{e:rho2}
\sum_{ 1\leq q \leq N}  q ^{\eta } \rho (\lambda ,q) &\lesssim 
 N ^{1+ \eta }   \sigma _{ - 1/2 } (\lambda ^2 ). 
\end{align}
Above $ \sigma _{b} (n ) = \sum_{d \;:\; d\,\vert\,n} d ^{b}$ is the generalized sum of divisors function. 
\end{proposition}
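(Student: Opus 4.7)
The plan is to reduce both sums to the common form $\sum_{d\mid \lambda^2}d^{-1/2}$ via a two-step decomposition of $q$, after which the estimates become elementary.

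First, I would write $q=q_1 2^r$ with $q_1$ odd, so that $\rho(\lambda,q)=\sqrt{(q_1,\lambda^2)\,2^r}$ factors across the $2$-adic and odd parts of $q$. Both sums then split as a geometric sum in $r$ times a sum over odd $q_1$:
\begin{align}
\sum_{q\geq N} q^{-a}\rho(\lambda,q) &= \sum_{r\geq 0} 2^{r(1/2-a)} \!\!\sum_{\substack{q_1\text{ odd}\\ q_1\geq N/2^r}} \!\! q_1^{-a}\sqrt{(q_1,\lambda^2)}, \\
\sum_{q\leq N} q^{\eta}\rho(\lambda,q) &= \sum_{0\leq r\leq \log_2 N} 2^{r(1/2+\eta)} \!\!\sum_{\substack{q_1\text{ odd}\\ q_1\leq N/2^r}} \!\! q_1^{\eta}\sqrt{(q_1,\lambda^2)}.
\end{align}

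Next, I would organize the inner sum on odd $q_1$ by $d=(q_1,\lambda^2)$, writing $q_1=dm$ and overestimating by summing over all $m$ with $dm$ in the prescribed range. For \eqref{e:rho}, a computation using $\sum_{m\geq L} m^{-a}\lesssim L^{1-a}$ (valid since $a>1$) yields
\begin{equation*}
\sum_{q_1\geq M} q_1^{-a}\sqrt{(q_1,\lambda^2)}\;\lesssim\; M^{1-a}\sum_{d\mid\lambda^2} d^{-1/2}\;=\;M^{1-a}\sigma_{-1/2}(\lambda^2),
\end{equation*}
where the key algebra is that $\sqrt{d}\cdot d^{-a}\cdot(M/d)^{1-a}=d^{-1/2}M^{1-a}$, so the $d$-dependence collapses exactly to $d^{-1/2}$; the tail range $d>M$ is handled separately using $d^{1/2-a}\leq d^{-1/2}M^{1-a}$. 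An analogous computation for \eqref{e:rho2}, this time with $\sum_{m\leq L}m^{\eta}\lesssim L^{1+\eta}$, gives $\sum_{q_1\leq M}q_1^{\eta}\sqrt{(q_1,\lambda^2)}\lesssim M^{1+\eta}\sigma_{-1/2}(\lambda^2)$, again by the algebraic identity $\sqrt{d}\cdot d^{\eta}\cdot (M/d)^{1+\eta}=d^{-1/2}M^{1+\eta}$.

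Finally, I plug these bounds on the odd-$q_1$ sums back into the $r$-decomposition. In both cases the resulting series in $r$ is of the form $\sum_{r\geq 0} 2^{-r/2}$ (the exponents in $2^{r(1/2\mp a)}$ or $2^{r(1/2+\eta)}$ and the inner $(N/2^r)^{1\mp a}$ or $(N/2^r)^{1+\eta}$ combine to $2^{-r/2}$), which converges absolutely. This yields the claimed bounds $N^{1-a}\sigma_{-1/2}(\lambda^2)$ and $N^{1+\eta}\sigma_{-1/2}(\lambda^2)$. There is no real obstacle here: the only step that requires any care is verifying that the $d$-dependence conspires to produce precisely the divisor exponent $-1/2$, matching the definition of $\sigma_{-1/2}(\lambda^2)$; everything else is bookkeeping on geometric series.
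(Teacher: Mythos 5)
Your proof is correct and follows essentially the same route as the paper: both decompose $q$ into its $2$-adic part and odd part, organize the odd part by its common divisor with $\lambda^2$ (your $d=(q_1,\lambda^2)$ is the paper's $t$), and reduce everything to $\sum_{d\mid\lambda^2}d^{-1/2}=\sigma_{-1/2}(\lambda^2)$ after summing geometric series. The only difference is bookkeeping order (you sum over $r$ last, the paper sums over $r$ first), which is immaterial.
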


\begin{proof}

Write $ q = 2 ^{r}s t$, where  $ s $ and $ t$ are odd, $ r \geq 0$ and 
$ ( s , \lambda ^2 )=1$.  With this notation, $ \rho (\lambda ,q) = t 2 ^{r}$.   For \eqref{e:rho}, the sum we need to estimate is 
\begin{align*}
\sum_{ t\;:\; t\,|\, \lambda ^2 } \sum_{s=1} ^{\infty } \sum_{\substack{r=0\\  2 ^{r} st\geq N }} ^{\infty } 
\frac {[ t 2 ^{r} ] ^{\frac{1 }2} } { [t s 2 ^{r}] ^{a}} .  
\end{align*}
We will sum over $ r$ first.  There is first the cases in which $ st \leq N$: 
\begin{align*}
\sum_{t \;:\; t\,|\, \lambda ^2 } \sum_{s=1} ^{\infty } \sum_{\substack{r=0\\  2 ^{r} st\geq N , \  st \leq N}} ^{\infty } 
\frac {[ t 2 ^{r} ] ^{\frac{1 }2} } { [ st 2 ^{r}] ^{a}} 
& \lesssim 
\sum_{ t\;:\; t\,|\, \lambda ^2 } \sum_{ \substack{s=1\\  st \leq N} } ^{\infty } \Bigl(\frac{st}N \Bigr) ^{a - 1/2} \frac{1} {s ^{a} t ^{a - 1/2}} 
\\
& \lesssim N ^{ 1/2 -a }\sum_{t \;:\; t\,|\, \lambda ^2 } \bigl(N/t \bigr) ^{1/2}  
= N ^{1-a} \sum_{t \;:\; t\,|\, \lambda ^2 } t ^{-1/2}  
\\
& \lesssim N ^{1-a} \sigma _{-1/2} (\lambda ^2 ).   
\end{align*}

The second case of $ st > N$ imposes no restriction on $ r$. The sum over $ r \geq 0$ is just a geometric series,  therefore we have to bound 
\begin{align}
\sum_{t \;:\; t\,|\, \lambda ^2 } \sum_{ \substack{s=1\\  st >  N} } ^{\infty }  \frac{1} {s ^{a} t ^{a- 1/2}} 
& \lesssim \sum_{t \;:\; t\,|\, \lambda ^2 }  \Bigl( \frac{t}N  \Bigr) ^{a-1}  \frac{1} {  t ^{a - 1/2}} 
\\  \label{e:loglog}
& \lesssim N ^{1-a} \sum_{t \;:\; t\,|\, \lambda ^2 }  \frac1{\sqrt t } \lesssim N ^{1-a }  \sigma  _{-1/2} (\lambda ^2 ).
\end{align}

\smallskip
We turn to \eqref{e:rho2} using the notation above.  We estimate 
\begin{align*}
\sum_{t \;:\; t\,|\, \lambda ^2 } \sum_{s=1} ^{\infty } \sum_{\substack{r=0\\   2 ^{r} st\leq N }} ^ \infty 
 [  2 ^{r}st] ^{\eta } [ 2 ^{r}t] ^{ \frac{1}2} 
 & \lesssim 
 \sum_{t \;:\; t\,|\, \lambda ^2 } \sum_{ \substack{ s=1\\   st \leq N}} ^{\infty }    [st] ^{\eta } t ^{\frac{1}2}  (N/st) ^{ \frac{1}2+ \eta } 
 \\
 & \lesssim N ^{\frac{1}2 + \eta }  \sum_{t \;:\; t\,|\, \lambda ^2 } \sum_{ \substack{ 1\leq s \leq N/t }}     s ^{  - \frac{1}2} 
 \\
 & \lesssim  N ^{1+ \eta }  \sum_{t \;:\; t\,|\, \lambda ^2} t ^{ - \frac{1}2 } \lesssim N ^{1+  \eta } \sigma  _{-1/2} (\lambda ^2 ). 
\end{align*}

\end{proof}

The `main' term is $ C _{\lambda } f$, and the `residual' term is 
$
R _{\lambda } = A _{\lambda }  - C _{\lambda } 
$. 
This is a foundational estimate for us.  (The reader should note that the normalizations 
    here and in \cite{MR2287111} are different.)

\begin{lemma}\label{l:Akos}\cite{MR2287111}*{ Lemma 1, page 71}
We have, for all $ \epsilon >0$, uniformly in $ \lambda  \in \Lambda_d $,  
$ \lVert R _{\lambda }\rVert_ {2\to 2} \lesssim _{\epsilon}  \lambda ^{\frac{1-d}2+ \epsilon }$. 

\end{lemma}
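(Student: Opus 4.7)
The plan is to use Plancherel's theorem, which converts the $\ell^{2}\to\ell^{2}$ operator norm of $R_{\lambda}$ into the $L^{\infty}(\mathbb{T}^{d})$ norm of its Fourier multiplier. It therefore suffices to prove the pointwise bound
\begin{equation*}
\lvert a_{\lambda}(\xi)-c_{\lambda}(\xi)\rvert \lesssim_{\epsilon}\lambda^{(1-d)/2+\epsilon},\qquad \xi\in\mathbb{T}^{d}.
\end{equation*}

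Fix $\xi\in\mathbb{T}^{d}$ and apply Dirichlet's simultaneous approximation theorem to select a rational $\ell/q$ with $1\le q\le\lambda$ and $\lvert\xi-\ell/q\rvert\lesssim(q\lambda)^{-1}$. Since $\lambda$ is large this places $\xi$ inside the support of $\Phi_{q}(\cdot-\ell/q)$. The heart of the argument is the major-arc pointwise approximation
\begin{equation*}
a_{\lambda}(\xi)=K(\lambda,q,\ell)\,\widetilde{d\sigma_{\lambda}}(\xi-\ell/q)+O_{\epsilon}\bigl(\lambda^{(1-d)/2+\epsilon}\bigr),
\end{equation*}
which I would establish by the standard circle-method recipe: split the exponential sum defining $a_{\lambda}$ into residue classes modulo $q$, factor out the quadratic Gauss/Kloosterman sum that collapses to $K(\lambda,q,\ell)$, and replace the remaining discrete sum over lattice points on the sphere by its Poisson-summed smooth analogue, whose leading term is $\widetilde{d\sigma_{\lambda}}(\xi-\ell/q)$ and whose tail is controlled by the stationary phase estimate \eqref{e:stationary}. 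Since $\Phi_{q}(\xi-\ell/q)=1$, this leading term is precisely the $(q,\ell)$ summand appearing in $c_{\lambda,q}(\xi)$.

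It then remains to show that all remaining summands in $c_{\lambda}(\xi)$ are negligible. The other $\ell'\ne\ell$ summands in $c_{\lambda,q}(\xi)$ vanish because the cutoffs $\Phi_{q}(\cdot-\ell'/q)$ have pairwise disjoint supports. For $q'\ne q$, the minimality of $q$ in Dirichlet's theorem forces $\lvert\xi-\ell'/q'\rvert\gtrsim(qq')^{-1}$, so I would combine the Kloosterman refinement in Lemma~\ref{l:K} with the stationary phase bound \eqref{e:stationary} and sum in $q'$ using Proposition~\ref{p:rho}, together with the standard divisor estimate $\sigma_{-1/2}(\lambda^{2})\lesssim_{\epsilon}\lambda^{\epsilon}$. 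The resulting bound absorbs into the desired $\lambda^{(1-d)/2+\epsilon}$.

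The main obstacle is the major-arc approximation with the sharp error $O(\lambda^{(1-d)/2+\epsilon})$, uniformly in $\xi$ and in $q\le\lambda$. The Poisson summation step introduces boundary errors that are only barely tamed by the $(d-1)/2$ stationary-phase decay of $\widetilde{d\sigma_{\lambda}}$, and this is the reason one cannot do better than the $+\epsilon$ slack and needs $d\ge 4$. Moreover, to sum the off-arc tail one genuinely needs the Weil/Kloosterman cancellation that upgrades the trivial $q^{-d+1}$ Gauss-sum bound to the $q^{-(d-3)/2}$ bound of Lemma~\ref{l:K}; without this refinement the summation over $q'\le\lambda$ would not converge at the required rate.
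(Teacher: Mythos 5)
The paper offers no proof of this lemma; it is quoted from Magyar \cite{MR2287111}*{Lemma 1}, so your sketch must stand on its own. The Plancherel reduction to the pointwise bound $\lvert a_\lambda(\xi)-c_\lambda(\xi)\rvert\lesssim_\epsilon\lambda^{(1-d)/2+\epsilon}$ is correct, but the scheme you build on it has a structural flaw. Your central claim is that, for the single pair $(q,\ell)$ produced by Dirichlet approximation of $\xi$, one has $a_\lambda(\xi)=K(\lambda,q,\ell)\,\widetilde{d\sigma_\lambda}(\xi-\ell/q)+O_\epsilon(\lambda^{(1-d)/2+\epsilon})$, and that every other summand of $c_\lambda(\xi)$ is negligible. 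Both halves fail at (and near) rationals with small denominator. Take $\xi=0$: Dirichlet gives $q=1$, $\ell=0$, and your approximation asserts $a_\lambda(0)=1+O(\lambda^{(1-d)/2+\epsilon})$; but $a_\lambda(0)=\lambda^{2-d}\lvert\mathbb S^d_\lambda\rvert$, which by the Hardy--Littlewood asymptotic is $\mathfrak S(\lambda^2)\cdot c_d(1+o(1))$ with singular series $\mathfrak S(\lambda^2)=\sum_{q\ge1}K(\lambda,q,0)$, a quantity bounded above and below but not equal to $1+o(1)$. The discrepancy is carried exactly by the summands $q'\ge2$, $\ell'=0$ of $c_\lambda(0)$, for which $\xi-\ell'/q'=0$: there is no stationary-phase decay there, and your assertion that minimality of $q$ forces $\lvert\xi-\ell'/q'\rvert\gtrsim(qq')^{-1}$ is false, because the sum in \eqref{e:Cdef} runs over all $\ell'\in\mathbb Z_{q'}^d$, so the same rational point recurs for every $q'$ (the fractions $\ell'/q'$ need not be in lowest terms). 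The main term is genuinely the full sum over $1\le q\le\lambda$, not one distinguished summand.

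Relatedly, the mechanism you propose for producing $K(\lambda,q,\ell)$ --- splitting $\sum_{\lvert n\rvert=\lambda}e(\xi\cdot n)$ into residue classes modulo $q$ --- cannot work as stated: the constraint $\lvert n\rvert=\lambda$ does not factor through residues, and counting lattice points on a sphere in a fixed residue class is as hard as the original problem. The Gauss sums, and the average over $a\in\mathbb Z_q^\times$ that assembles them into Kloosterman sums, arise from a Farey dissection in an auxiliary variable $\theta$ after writing $a_\lambda(\xi)=\lambda^{2-d}\int_0^1 e(-\lambda^2\theta)\sum_{n}e(\theta\lvert n\rvert^2+\xi\cdot n)\,d\theta$ (suitably regularized, e.g.\ by $\theta\mapsto\theta+i\lambda^{-2}$). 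The dissection is performed in $\theta$, uniformly over all $\xi$, every arc $a/q$ with $q\le\lambda$ contributes to the main term, and the error analysis bounds, arc by arc, the deviation of the theta function from its Gauss-sum factorization and of the resulting oscillatory integral from $\widetilde{d\sigma_\lambda}$; this is where Lemma~\ref{l:K} and Proposition~\ref{p:rho} enter. Your outline would need to be reorganized along these lines before the quantitative issues you flag in your last paragraph even arise.
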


For a multiplier $ m$ on $ \mathbb T ^{d}$, define a family of related multipliers by 
\begin{equation}\label{e:mq}
m _{\lambda , q } = \sum_{\ell \in \mathbb Z^d_q} 
K (\lambda ,  q,  \ell )  m (\xi - \ell /q).  
\end{equation}
We estimate the Fourier transform here.  

\begin{proposition}\label{p:mq} For a multiplier $ m_{\lambda , q}$ as in \eqref{e:mq}, 
we have 
\begin{equation} \label{e:Mq}
 \lvert   \widecheck m _{\lambda ,q} (n) \rvert
\leq q \lvert    \widecheck m (n)  \rvert . 
\end{equation} 
\end{proposition}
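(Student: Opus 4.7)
The plan is to compute $\widecheck m_{\lambda,q}(n)$ directly, reducing everything to a Ramanujan-type sum that is trivially bounded by $q$. The key observation is that the translates by $\ell/q$ appearing in \eqref{e:mq} pull out of the inverse Fourier transform as phases $e(n \cdot \ell/q)$, and all the $\ell$-dependence lives in these phases together with $K(\lambda,q,\ell)$.

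First I would interchange the finite sum over $\ell$ with the integral defining $\widecheck m_{\lambda,q}$ to obtain the factorization
\begin{equation*}
 \widecheck m _{\lambda ,q} (n)
 = \widecheck m(n) \cdot S(n),
 \qquad
 S(n) := \sum_{\ell \in \mathbb Z_q^d} K(\lambda,q,\ell)\, e_q( n\cdot \ell).
\end{equation*}
Thus the claim reduces to showing $|S(n)| \leq q$ for every $n \in \mathbb Z^d$.

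Next I would substitute the definition \eqref{e:Kdef} of $K(\lambda,q,\ell)$ into $S(n)$ and swap the order of summation so that the sum over $\ell$ is innermost:
\begin{equation*}
 S(n) = q^{-d}\sum_{a\in \mathbb Z_q^\times}\sum_{m\in \mathbb Z_q^d} e_q\bigl(-a\lambda^2 + a\lvert m\rvert^2\bigr)\sum_{\ell \in \mathbb Z_q^d} e_q\bigl((m+n)\cdot \ell\bigr).
\end{equation*}
By orthogonality of characters on $\mathbb Z_q^d$, the inner sum equals $q^d$ when $m \equiv -n \pmod q$ and vanishes otherwise. Evaluating at $m = -n$ (note $\lvert -n\rvert^2 = \lvert n\rvert^2$) collapses $S(n)$ to the Ramanujan-type sum
\begin{equation*}
 S(n) = \sum_{a\in \mathbb Z_q^\times} e_q\bigl(a(\lvert n\rvert^2 - \lambda^2)\bigr).
\end{equation*}
This is a sum of $\lvert \mathbb Z_q^\times \rvert \leq q$ complex numbers of modulus one, so $|S(n)| \leq q$, which gives \eqref{e:Mq}.

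There is no real obstacle here; the content is simply to apply orthogonality at the right moment. The only thing to be careful about is the clean interchange of the finite $\ell$-sum with the Fourier inversion integral, which is justified trivially by finiteness, and keeping track of the normalizations in the definition of $K(\lambda,q,\ell)$.
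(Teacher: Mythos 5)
Your proof is correct and is essentially the paper's argument in a slightly different order: the paper expands $K(\lambda,q,\ell)$ into Gauss sums and uses the inversion formula on $\mathbb Z_q^d$ to collapse each piece to the phase $e_q\bigl(a(\lvert n\rvert^2-\lambda^2)\bigr)$, then sums the $\varphi(q)\leq q$ unimodular terms, which is exactly your orthogonality computation of $S(n)$. (The only cosmetic point is the sign of the phase $e_q(\pm n\cdot\ell)$ coming from the inverse transform convention, which is immaterial since $\lvert -n\rvert^2=\lvert n\rvert^2$.)
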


We include a proof for convenience. 
\begin{proof}
Our needs here are no different than those of \cites{MR1888798,I}. See for instance the argument after \cite{I}*{(2.9)}.   
Rewrite the Kloosterman sum in \eqref{e:Kdef} in terms of Gauss sums, namely 
\begin{align}   \label{e:Gauss1}
K (\lambda ,  q,  \ell ) & = \sum_{ a \in \mathbb Z^\times_q }  e_q (-a\lambda^2  )G (a/q, \ell ) , 
\\   \label{e:Gauss2}
\textup{where} \quad G (a/q, \ell ) & :=  q ^{-d}   \sum_{n\in \mathbb Z_q^d} e_q \bigl( \lvert  n\rvert ^2 a + n \cdot \ell  \bigr)  . 
\end{align}
Observe that $ G (a/, \cdot )$  is a Fourier transform on the group $ \mathbb Z^d_q$.  Namely, if $ \phi  ( \ell ) = e ( \lvert  \ell \rvert ^2 a/q)$ is the function on 
$ \mathbb Z^d_q$, we have $ \widehat \phi (- \ell ) = \widehat \phi (\ell ) = G (a/q, \ell ) $. 
Using the version formula on that group we have 
\begin{equation}
\label{e:G2}
\sum_{\ell \in \mathbb Z^d_q } G (a/q, \ell ) e_q ( y \cdot \ell )  = e_q (\lvert  y\rvert ^2 a), \qquad y \in \mathbb Z ^{d}_q .
\end{equation}

Define 
\begin{equation} \label{e:m}
m ^{a/q} (\xi ) =  e _q (-\lambda ^2 a) \sum_{ \ell  \in \mathbb Z^d_q} G (a/q, \ell ) m (\xi - \ell /q), \qquad 
 a \in \mathbb Z _q ^{\times }. 
\end{equation}
By \eqref{e:G2}, we have 
\begin{align*}
\widecheck {m ^{a/q}} (n) 
& = \int _{\mathbb T ^{d}}  m ^{a/q} (\xi ) e (- \xi \cdot n) \; d \xi 
\\
 & =  e_q (-\lambda ^2 a) 
  \int _{\mathbb T ^{d}} 
 \sum_{ \ell  \in \mathbb Z^d_q} G (a/q, \ell ) m (\xi - \ell /q) e (- \xi \cdot n) \; d \xi 
 \\
 &= e_q (  (\lvert  n \rvert ^2  -\lambda ^2 ) a) \widecheck m (n) 
\end{align*}
Take the absolute value, and sum over $ q\in \mathbb Z ^{\times }_q$ to conclude the Proposition. 
\end{proof}

\section{Proof}\label{s:proof}

It suffices to show this. For $ f = \mathbf 1_{F} \subset  E = [0, \lambda ] ^{d}\cap \mathbb Z ^{d}$, 
choices of $ 0< \epsilon <1$,    and integers $ N$ we can write 
\begin{gather}\label{e:Absorb}
A _{\lambda } f \leq M_1 + M_2 , 
\\ \label{e:xM1}
\textup{where} \quad  \langle M_1 \rangle _{E, \infty } \lesssim N  ^{2} \langle f \rangle _{E} 
\\ \label{e:xM2}
\textup{and} \quad  \langle M_2 \rangle _{E,2} \lesssim _{\epsilon } 
  N ^{ \epsilon + \frac{3-d}2} \sigma_  {- 1/2 }  (\lambda ^2 ) \cdot \langle f \rangle_E ^{1/2} .
\end{gather}
Above, $ \sigma _ {- 1/2 } (\lambda ^2 )$ is the generalized sum of divisors function, in Proposition~\ref{p:rho}. 

A straight forward argument concludes the proof from here, by optimizing over $ N$. 
Indeed, for $ g = \mathbf 1_{G}$ with $ G \subset E$, we have for any integer $ N$, 
\begin{equation*}
\lvert  E\rvert ^{-1}  \langle A _{\lambda } f, g \rangle \lesssim _{\epsilon } N  ^2 \langle f \rangle_E \langle g \rangle_E +
N ^{\epsilon +\frac{3-d}2  } \sigma_ {- 1/2 } (\lambda ^2 ) \bigl[ \langle f \rangle _{E}  \langle g \rangle_E \bigr] ^{1/2} . 
\end{equation*}
Minimizing over $ N$, we see that we should take  
\begin{equation*}
N ^{\frac{d+1}2 - \epsilon } \simeq  \sigma _{- 1/2 } (\lambda ^2 ) \bigl[ \langle f \rangle _{E}  \langle g \rangle_E \bigr] ^{ - \frac{1}2}. 
\end{equation*}
With this choice of $ N$, we see that 
\begin{equation*}
\lvert  E\rvert ^{-1}  \langle A _{\lambda } f, g \rangle \lesssim _{\epsilon } 
 \sigma _ { - 1/2 }(\lambda ^2 ) ^{\frac{4} {d+1} + \epsilon '} 
  \bigl[ \langle f \rangle _{E}  \langle g \rangle_E \bigr] ^{ \frac{d-1} {d+1} + \epsilon '}.
\end{equation*}
Above, $ \epsilon ' = \epsilon ' (\epsilon )$ tends to zero as $ \epsilon $ does.  
This is a restricted weak type inequality.  Interpolation with the obvious $ \ell ^2 $ bound completes the proof of our Theorem. 
We remark that this gives a concrete estimate of the dependence on $ \lambda ^2 $.  
We have 
\begin{equation*}
\sigma  _{- 1/2 } (n) \leq \prod _{j=1} ^{\omega (n )} (1 - \tfrac1{ \sqrt{p_j}  }) ^{-1}  \lesssim e ^{c \frac{\sqrt {\omega (n)}} {\log \omega (n)}}, 
\end{equation*}
where $ 2= p_1 < p_2 < \cdots $ is the increasing ordering of the primes.  This is at most a constant depending upon $ \omega (n)$, 
the number of distinct prime factors of $ n$.

\smallskip 
We turn to the construction of $ M_1$ and $ M_2$. 
If $ \lambda \leq N$, we set $ M_1 = A _{\lambda } f$.  Since we normalize the spherical averages by $ \lambda ^{d-2}$, \eqref{e:xM1} is immediate.      

Proceed under the assumption that $ N < \lambda $, and write $ A _{\lambda } = C _{\lambda } + R_{\lambda }$, with $ c _{\lambda }$ defined in \eqref{e:a1}.  The first contribution to $ M_2$ is $ M _{2,1} = R _{\lambda } f$. 
By Lemma~\ref{l:Akos},  this satisfies \eqref{e:xM2}.  (We do not need the arithmetic function $ \sigma _ {-1/2} (\lambda ^2 )$ in this case.)  
Turn to $ C _{\lambda }$. The second contribution to $ M_2$ is the `large $ q$' term  
\begin{equation} \label{e:M22}
M _{2,2} = \sum_{N\leq q \leq \lambda } C _{\lambda ,q} f . 
\end{equation}
By the Weil estimates for Kloosterman sums \eqref{e:K<}, and Plancherel, we have 
\begin{align*}
\langle M _{2,2} \rangle _{E,2}& \lesssim _{\epsilon }  \langle f \rangle _{E} ^{1/2} \sum_{N\leq q \leq \lambda } q ^{ \frac{1-d}2 + \epsilon } 
\rho (q, \lambda ) 
\lesssim  _{\epsilon }
\langle f \rangle _E ^{1/2} 
N ^{\epsilon + \frac{3-d}2} \sigma _{-1/2} (\lambda ^2 ) . 
\end{align*}
The last estimate uses \eqref{e:rho}.

Turn to the `small $ q$' term.   This requires additional contributions to the $ M_1$ and $ M_2$ terms.  
Write 
$ c _{\lambda , q} = c _{\lambda , q} ^{1} + c _{\lambda , q} ^2 $, 
where 
\begin{equation}\label{e:xcq1}
c _{\lambda , q} ^1 (\xi ) = 
\sum_{\ell \in \mathbb Z_q^d} 
K (\lambda ,  q,  \ell )  \Phi _{ \lambda q/N} (\xi - \ell /q) \widetilde  {d \sigma _{\lambda }} ( \xi - \ell /q). 
\end{equation}
We have inserted an additional cutoff term $\Phi _{ \lambda q/N} $ above. 
Then, our third contribution to $ M _{2}$ is the high frequency term $ M _{2,3} = \sum_{q \leq N }  C _{\lambda ,q} ^{2}f$. 
Using the stationary decay estimate \eqref{e:stationary} and the Kloosterman refinement \eqref{e:K<} to see that 
\begin{align*}
\langle M _{2,3} \rangle _{E,2} & \lesssim _{\epsilon } \langle f \rangle_E ^{1/2} \sum_{q\leq N} 
(q/N) ^{ \frac{d-1}2} q ^{\epsilon +\frac{1-d}2} \rho (\lambda ^2 , q)  
\\
& \lesssim  _{\epsilon }  \langle f \rangle_E ^{1/2}  N ^{\frac{1-d}2} \sum_{q\leq N}  q ^{\epsilon } \rho (\lambda ^2 ,q) 
\lesssim  _{\epsilon }\langle f \rangle_E ^{1/2}   
N ^{\epsilon + \frac{3-d}2} \sigma _{- 1/2 } (\lambda ^2 ) . 
\end{align*}
The last estimate follows from \eqref{e:rho2}. 

Then the main point is the last contributions to $ M _{1}$ below.  The definition of $ M _{1,2}$ is of the form to which 
\eqref{p:mq} applies.  
\begin{align}
M _{1,2}(n) &  \leq  \sum_{q \leq N} q  \cdot 
 \widecheck \Phi _{ \lambda q/N} \ast d \sigma _{\lambda } \ast f (n)
\\ &\lesssim  N \langle f \rangle _{E} \sum_{q\leq N} 1 \lesssim N ^2 \langle f \rangle_E. 
\end{align}
 Observe that $\Phi _{ \lambda q/N} \ast d \sigma _{\lambda } \ast f $ is an average of $ f$ over an annulus of radius $ \lambda $, and width $ \lambda q/N$. 
This is compared to $ \langle f \rangle_E$, with loss of $ N/q$. Our proof of \eqref{e:xM1} and \eqref{e:xM2} is complete. 

\section{Complements to the Main Theorems} \label{s:complements}

Concerning sharpness of the  $ \ell ^{p}$ improving estimates in Theorem~\ref{t:improve}, 
the best counterexample we have been able to find shows that  if one has the inequality below, 
\begin{equation} \label{e:assume}
\lVert A _{\lambda } f\rVert _{p'} \lesssim \lambda ^{ d (1- \frac{2}p)} \lVert f\rVert _{p}, 
\end{equation}
valid for all $ \lambda $, then necessarily $ p \geq \frac{d+2}{d}$.  provided $ d \geq 5$.  

Indeed, take $ \lambda ^2 $ to be odd, and  let  $ f $ be the indicator of the sphere of radius $ \lambda $.  
Use the fact that $ A _{\lambda } f (0) \simeq 1$.  

But, in the case of $ d\geq 5$, also take  $ g$ to be the indicator of the set $ G_ \lambda  = \{ A _{\lambda } f > c/ \lambda \}$, 
for appropriate choice of constant $ c$.  That is, $ G_ \lambda $ is the set of $ x$'s for which $ \mathbb S _{\lambda } \cap  x+ \mathbb S _{\lambda }$ 
has about the expected cardinality of $ \lambda ^{d-3}$.

We claim that $ \lvert  G_ \lambda \rvert \gtrsim \lambda   $.  
For an choice of $ 0<  x_1 < \lambda /2$ divisible by $ 4$,
note that there are about $ \lambda ^{d-3}$ points $ (x_2 ,\dotsc, x _{d}) \in \mathbb Z ^{d-1}$ 
of magnitude $ \sqrt {\lambda ^2 - (x_1/2) ^2 }$.  From this, we see that 
\begin{equation*}
\lVert  (x_1 , 0,\dotsc, 0) - (x_1/2, y_2 ,\dotsc, y_d)\rVert = \lambda . 
\end{equation*}
That is,  $ (x_1, 0 ,\dotsc, 0) \in G_ \lambda $.

 We also have an upper bound for $ G$. Apply the $ \ell ^{p}$ improving inequality \eqref{e:improve} 
to $ f = \mathbf 1_{S _{\lambda }}$ to see that 
for $ 0<  \epsilon < 1$, 
\begin{equation} \label{e:generic}
\lvert  G \rvert =  \lvert  \{ A _{\lambda } \mathbf 1_{\mathbb S   _{\lambda }} > c / \lambda \}\rvert 
\lesssim  \lambda ^{ \frac{d+3} {2} + \epsilon }, \qquad \lambda^2 \in \mathbb{N}. 
\end{equation}
Is this estimate sharp?  Notice that this estimate concerns the set of solutions $ n$ to a \emph{pair of}  quadratic equations 
below in which  $ x = (x_1 ,\dotsc, x_d)$ is fixed. 
\begin{align*}
n_1 ^2 + \cdots + n _d ^2 &= \lambda ^2 ,
\\
(n_1 -x_1) ^2 + \cdots + (n _d -x_d) ^2 &= \lambda ^2 , 
\end{align*}
Moreover, we require of $ x$ that the set of possible solutions $ n$ should be of about the expected cardinality. 
We could not find this estimate in the literature.

\bibliographystyle{alpha,amsplain}	

\begin{bibdiv}
\begin{biblist}

\bib{MR3819049}{article}{
   author={Anderson, Theresa},
   author={Cook, Brian},
   author={Hughes, Kevin},
   author={Kumchev, Angel},
   title={Improved $\ell^p$-boundedness for integral $k$-spherical maximal
   functions},
   journal={Discrete Anal.},
   date={2018},
   pages={Paper No. 10, 18},
   issn={2397-3129},
   review={\MR{3819049}},
}

\bib{MR812567}{article}{
      author={Bourgain, Jean},
       title={Estimations de certaines fonctions maximales},
        date={1985},
        ISSN={0249-6291},
     journal={C. R. Acad. Sci. Paris S\'er. I Math.},
      volume={301},
      number={10},
       pages={499\ndash 502},
      review={\MR{812567}},
}

\bib{MR1654767}{article}{
      author={Christ, Michael},
       title={Convolution, curvature, and combinatorics: a case study},
        date={1998},
        ISSN={1073-7928},
     journal={Internat. Math. Res. Notices},
      number={19},
       pages={1033\ndash 1048},
  url={http://dx.doi.org.prx.library.gatech.edu/10.1155/S1073792898000610},
      review={\MR{1654767}},
}

\bib{MR3960006}{article}{
   author={Cook, Brian},
   title={Maximal function inequalities and a theorem of Birch},
   journal={Israel J. Math.},
   volume={231},
   date={2019},
   number={1},
   pages={211--241},
   issn={0021-2172},
   review={\MR{3960006}},
   doi={10.1007/s11856-019-1853-y},
}

\bib{MR3892403}{article}{
   author={Culiuc, Amalia},
   author={Kesler, Robert},
   author={Lacey, Michael T.},
   title={Sparse bounds for the discrete cubic Hilbert transform},
   journal={Anal. PDE},
   volume={12},
   date={2019},
   number={5},
   pages={1259--1272},
   issn={2157-5045},
   review={\MR{3892403}},
   doi={10.2140/apde.2019.12.1259},
}

\bib{MR3671577}{article}{
   author={Hughes, Kevin},
   title={Restricted weak-type endpoint estimates for k-spherical maximal
   functions},
   journal={Math. Z.},
   volume={286},
   date={2017},
   number={3-4},
   pages={1303--1321},
   issn={0025-5874},
   review={\MR{3671577}},
   doi={10.1007/s00209-016-1802-y},
}

\bib{160904313}{article}{
      author={{Hughes}, K.},
       title={{The discrete spherical averages over a family of sparse
  sequences}},
        date={2016-09},
     journal={ArXiv e-prints},
      eprint={1609.04313},
}

\bib{180409260H}{article}{
      author={{Hughes}, K.},
       title={{$\ell^p$-improving for discrete spherical averages}},
        date={2018-04},
     journal={ArXiv e-prints},
      eprint={1804.09260},
}

\bib{MR2053347}{article}{
      author={Ionescu, Alexandru~D.},
       title={An endpoint estimate for the discrete spherical maximal
  function},
        date={2004},
        ISSN={0002-9939},
     journal={Proc. Amer. Math. Soc.},
      volume={132},
      number={5},
       pages={1411\ndash 1417},
  url={https://doi-org.prx.library.gatech.edu/10.1090/S0002-9939-03-07207-1},
      review={\MR{2053347}},
}

\bib{I}{article}{
      author={Ionescu, Alexandru~D.},
       title={An endpoint estimate for the discrete spherical maximal
  function},
        date={2004},
        ISSN={0002-9939},
     journal={Proc. Amer. Math. Soc.},
      volume={132},
      number={5},
       pages={1411\ndash 1417},
  url={https://doi-org.prx.library.gatech.edu/10.1090/S0002-9939-03-07207-1},
      review={\MR{2053347}},
}

\bib{181002240}{article}{
      author={{Kesler}, R.},
      author={{Lacey}, M.~T.},
      author={{Mena Arias}, D.},
       title={{Sparse Bound for the Discrete Spherical Maximal
  Functions}},
        date={2018-10},
     journal={Pure Appl. Analy., to appear},
      eprint={1810.02240},
}

\bib{MR3933540}{article}{
   author={Kesler, Robert},
   author={Arias, Dar\'{\i}o Mena},
   title={Uniform sparse bounds for discrete quadratic phase Hilbert
   transforms},
   journal={Anal. Math. Phys.},
   volume={9},
   date={2019},
   number={1},
   pages={263--274},
   issn={1664-2368},
   review={\MR{3933540}},
   doi={10.1007/s13324-017-0195-3},
}

\bib{MR1555249}{article}{
      author={Kloosterman, H.~D.},
       title={On the representation of numbers in the form
  {$ax^2+by^2+cz^2+dt^2$}},
        date={1927},
        ISSN={0001-5962},
     journal={Acta Math.},
      volume={49},
      number={3-4},
       pages={407\ndash 464},
         url={https://doi-org.prx.library.gatech.edu/10.1007/BF02564120},
      review={\MR{1555249}},
}

\bib{MR0358443}{article}{
      author={Littman, Walter},
       title={{$L\sp{p}-L\sp{q}$}-estimates for singular integral operators
  arising from hyperbolic equations},
        date={1973},
       pages={479\ndash 481},
      review={\MR{0358443}},
}

\bib{MR1888798}{article}{
      author={Magyar, A.},
      author={Stein, E.~M.},
      author={Wainger, S.},
       title={Discrete analogues in harmonic analysis: spherical averages},
        date={2002},
        ISSN={0003-486X},
     journal={Ann. of Math. (2)},
      volume={155},
      number={1},
       pages={189\ndash 208},
         url={https://doi-org.prx.library.gatech.edu/10.2307/3062154},
      review={\MR{1888798}},
}

\bib{MR1617657}{article}{
      author={Magyar, Akos},
       title={{$L^p$}-bounds for spherical maximal operators on {$\bold Z^n$}},
        date={1997},
        ISSN={0213-2230},
     journal={Rev. Mat. Iberoamericana},
      volume={13},
      number={2},
       pages={307\ndash 317},
         url={https://doi-org.prx.library.gatech.edu/10.4171/RMI/222},
      review={\MR{1617657}},
}

\bib{MR1925339}{article}{
      author={Magyar, Akos},
       title={Diophantine equations and ergodic theorems},
        date={2002},
        ISSN={0002-9327},
     journal={Amer. J. Math.},
      volume={124},
      number={5},
       pages={921\ndash 953},
  url={http://muse.jhu.edu.prx.library.gatech.edu/journals/american_journal_of_mathematics/v124/124.5magyar.pdf},
      review={\MR{1925339}},
}

\bib{MR2287111}{article}{
      author={Magyar, Akos},
       title={On the distribution of lattice points on spheres and level
  surfaces of polynomials},
        date={2007},
        ISSN={0022-314X},
     journal={J. Number Theory},
      volume={122},
      number={1},
       pages={69\ndash 83},
  url={https://doi-org.prx.library.gatech.edu/10.1016/j.jnt.2006.03.006},
      review={\MR{2287111}},
}

\bib{MR2346547}{article}{
      author={Magyar, Akos},
      author={Stein, Elias~M.},
      author={Wainger, Stephen},
       title={Maximal operators associated to discrete subgroups of nilpotent
  {L}ie groups},
        date={2007},
        ISSN={0021-7670},
     journal={J. Anal. Math.},
      volume={101},
       pages={257\ndash 312},
  url={https://doi-org.prx.library.gatech.edu/10.1007/s11854-007-0010-4},
      review={\MR{2346547}},
}

\bib{MR1825254}{article}{
      author={Oberlin, Daniel~M.},
       title={Two discrete fractional integrals},
        date={2001},
        ISSN={1073-2780},
     journal={Math. Res. Lett.},
      volume={8},
      number={1-2},
       pages={1\ndash 6},
  url={https://doi-org.prx.library.gatech.edu/10.4310/MRL.2001.v8.n1.a1},
      review={\MR{1825254}},
}

\bib{MR2872554}{article}{
      author={Pierce, Lillian~B.},
       title={Discrete fractional {R}adon transforms and quadratic forms},
        date={2012},
        ISSN={0012-7094},
     journal={Duke Math. J.},
      volume={161},
      number={1},
       pages={69\ndash 106},
         url={https://doi-org.prx.library.gatech.edu/10.1215/00127094-1507288},
      review={\MR{2872554}},
}

\bib{MR1771530}{article}{
      author={Stein, E.~M.},
      author={Wainger, S.},
       title={Discrete analogues in harmonic analysis. {II}. {F}ractional
  integration},
        date={2000},
        ISSN={0021-7670},
     journal={J. Anal. Math.},
      volume={80},
       pages={335\ndash 355},
         url={https://doi-org.prx.library.gatech.edu/10.1007/BF02791541},
      review={\MR{1771530}},
}

\bib{MR0420116}{article}{
      author={Stein, Elias~M.},
       title={Maximal functions. {I}. {S}pherical means},
        date={1976},
        ISSN={0027-8424},
     journal={Proc. Nat. Acad. Sci. U.S.A.},
      volume={73},
      number={7},
       pages={2174\ndash 2175},
      review={\MR{0420116}},
}

\bib{MR1945293}{article}{
      author={Stein, Elias~M.},
      author={Wainger, Stephen},
       title={Two discrete fractional integral operators revisited},
        date={2002},
        ISSN={0021-7670},
     journal={J. Anal. Math.},
      volume={87},
       pages={451\ndash 479},
         url={https://doi-org.prx.library.gatech.edu/10.1007/BF02868485},
        note={Dedicated to the memory of Thomas H. Wolff},
      review={\MR{1945293}},
}

\bib{MR0256219}{article}{
      author={Strichartz, Robert~S.},
       title={Convolutions with kernels having singularities on a sphere},
        date={1970},
        ISSN={0002-9947},
     journal={Trans. Amer. Math. Soc.},
      volume={148},
       pages={461\ndash 471},
         url={http://dx.doi.org.prx.library.gatech.edu/10.2307/1995383},
      review={\MR{0256219}},
}

\bib{MR1969206}{article}{
      author={Tao, Terence},
      author={Wright, James},
       title={{$L\sp p$} improving bounds for averages along curves},
        date={2003},
        ISSN={0894-0347},
     journal={J. Amer. Math. Soc.},
      volume={16},
      number={3},
       pages={605\ndash 638},
  url={http://dx.doi.org.prx.library.gatech.edu/10.1090/S0894-0347-03-00420-X},
      review={\MR{1969206}},
}

\bib{MR0027006}{article}{
      author={Weil, Andr\'e},
       title={On some exponential sums},
        date={1948},
        ISSN={0027-8424},
     journal={Proc. Nat. Acad. Sci. U. S. A.},
      volume={34},
       pages={204\ndash 207},
      review={\MR{0027006}},
}

\end{biblist}
\end{bibdiv}

\end{document}